\numberwithin{equation}{section}
\newtheorem{theorem}{Theorem}[section]
\newtheorem{proposition}[theorem]{Proposition}
\newtheorem{lemma}[theorem]{Lemma}
\newtheorem{definition}[theorem]{Definition}
\newtheorem{The main theorem}[theorem]{The main theorem}
\theoremstyle{definition}
 \def\f{\mathcal F\text{-}} 
\begin{document}

\begin{frontmatter}




\title {Weakly solutions to the complex Monge-Amp\`ere equation  on bounded  plurifinely hyperconvex domains}


\author[label1]{Nguyen Xuan Hong
}
 \address[label1]{Department of Mathematics, Hanoi National University of Education, 136 Xuan Thuy Street, Cau Giay District, Hanoi, Vietnam} 
\ead{xuanhongdhsp@yahoo.com} \author[label3]{Hoang Van Can} \address[label3]{Department of Basis Sciences, University of Transport Technology, 54 Trieu Khuc, Thanh Xuan District, Hanoi, Vietnam}   \ead{ vancan.hoangk4@gmail.com}

\begin{abstract}
Let  $\mu$ be a  non-negative measure  defined on bounded $\mathcal F$-hyperconvex domain $\Omega$.  We are interested in giving sufficient conditions on $\mu$ such that  we can find  a plurifinely plurisubharmonic function  satisfying  $NP (dd^c u)^n =\mu$ in $QB(\Omega)$. 
\end{abstract}

\begin{keyword}
 plurifinely pluripotential theory    \sep plurifinely plurisubharmonic functions
 \sep complex Monge-Amp\`ere equations 


 \MSC[2010] 32U05 \sep 32U15
\end{keyword}

\end{frontmatter}



\section{Introduction}

Let $D$ be an open set in $\mathbb C^n$ and let   $ PSH^-(D)$ be the family of negative plurisubharmonic functions in $D$. 
The plurifine topology $\mathcal F$ on a  Euclidean open set $D$  is the smallest topology that makes all plurisubharmonic functions on $D$ continuous. 
Notions pertaining to the plurifine topology are indicated with the prefix $\mathcal F$ to distinguish them from notions pertaining to the Euclidean topology on $\mathbb C^n$.
For a set $A\subset \mathbb  C^n$ we write $\overline{A}$ for the closure of $A$ in the one point compactification
of $\mathbb C^n$, $\overline{A}^{\mathcal F}$ for the $\mathcal F$-closure of $A$ and $\partial _{\mathcal F}A$ for the $\mathcal F$-boundary of $A$. 

In 2003,
El Kadiri  \cite{K03} defined   the notion of $\mathcal F$-plurisubharmonic function in an $\mathcal F$-open subset of $\mathbb C^n$ and studied properties of such functions.   Later, 
El Marzguioui and Wiegerinck \cite{MW10}  proved the continuity properties of the plurifinely plurisubharmonic functions.  Next, 
El Kadiri,  Fuglede and Wiegerinck \cite{KFW11} proved  the most important properties of the  plurifinely plurisubharmonic functions.  
El Kadiri and Wiegerinck  \cite{KW14}  defined   the Monge-Amp\`{e}re operator on finite   plurifinely plurisubharmonic functions in $\mathcal F$-open sets.
They showed that it defines a non-negative measure  which vanishes on all pluripolar sets.  
Note that the measure is in general not a Radon measure, i.e. not Euclidean locally finite. 
They also defined the non-polar part $NP(dd^c u)^n$ of $\mathcal F$-plurisubharmonic function  $u$  by
 $$
\int_A NP(dd^c u)^n = \lim_{j\to+\infty} \int_A  (dd^c \max(u,-j))^n, \ A\in QB(\Omega).
$$ 
El Kadiri and  Smit \cite{KS14} 
introduced   the notion of $\mathcal F$-maximal $\mathcal F$-plurisubharmonic functions  and studied properties of such functions.  
  Hong, Hai and Viet \cite{HHV17}   proved that $\mathcal F$-maximality is an $\mathcal F$-local notion for bounded $\mathcal F$-plurisubharmonic functions.  
 Trao, Viet and Hong  \cite{TVH}  
studied  the approximation of a negative $\mathcal F$-plurisubharmonic function  by an increasing sequence of plurisubharmonic functions.
They defined   the notion  of   bounded  $\mathcal F$-hyperconvex domain 
which extends the notion of bounded hyperconvex domain of $\mathbb C^n$  in a natural way. 
Recently,  Hong \cite{Hong17}  studied the Dirichlet problem in  $\mathcal F$-domain.  He 
proved  that under the suitable conditions, the  complex Monge-Amp\`ere equation can be solved.

The aim of this paper is to  study the  complex Monge-Amp\`ere equations  in  bounded $\mathcal F$-hyperconvex domains. 
Namely, we prove the following theorem.

\begin{theorem} \label{the1}
Let $\Omega  $ be a bounded $\mathcal F$-hyperconvex domain in $\mathbb C^n$ and let    $\mu$ be a  non-negative measure  on $QB(\Omega)$ which  vanishes  on all  pluripolar subsets of $\Omega$ such that  
\begin{equation}\label{ii}
\int_\Omega (-\psi) d \mu <+\infty \ , \text{  for some } \psi \in \mathcal F \text{-} PSH^-(\Omega).
\end{equation}
Then, there exists   $u\in  \mathcal F \text{-} PSH^- (\Omega) $ such that $NP(dd^c u)^n = \mu$ on $QB(\Omega)$. 
\end{theorem}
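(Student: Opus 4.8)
The plan is to adapt Cegrell's construction of solutions to the complex Monge--Amp\`ere equation to the plurifine setting, through a double approximation: first of the measure $\mu$ by finite measures vanishing on pluripolar sets, and then of the solution by a monotone sequence whose limit stays controlled by the weighted energy bound \eqref{ii}. Fix the function $\psi$ furnished by \eqref{ii} (which we may take strictly negative on $\Omega$) and put, for each $j\ge 1$,
$$
\mu_j:=\mathbf 1_{\{\psi\le -1/j\}}\,\mu .
$$
Each $\mu_j$ is a non-negative measure vanishing on pluripolar sets, and on its support one has $-\psi\ge 1/j$, whence
$$
\mu_j(\Omega)\le j\int_\Omega(-\psi)\,d\mu<+\infty ,
$$
so that $\mu_j$ has finite total mass. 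Since $\mu$ charges no pluripolar set and $\{\psi=-\infty\}$ is pluripolar, $\mu_j\nearrow\mu$ on $QB(\Omega)$.

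For each finite measure $\mu_j$ vanishing on pluripolar sets, I would invoke the existence theory for the operator $NP(dd^c\cdot)^n$ on $\mathcal F$-hyperconvex domains, namely El Kadiri--Wiegerinck \cite{KW14} together with the Dirichlet-problem results of Hong \cite{Hong17}, to produce $u_j\in\mathcal F\text{-}PSH^-(\Omega)$ in a finite-energy class with $NP(dd^c u_j)^n=\mu_j$. Because $\mu_j\le\mu_{j+1}$, the comparison principle for $NP(dd^c\cdot)^n$ forces $u_j\ge u_{j+1}$, so $(u_j)$ decreases to a limit $u$. To keep $u$ from degenerating I would exploit \eqref{ii}: since
$$
\int_\Omega(-\psi)\,NP(dd^c u_j)^n=\int_\Omega(-\psi)\,d\mu_j\le\int_\Omega(-\psi)\,d\mu<+\infty
$$
uniformly in $j$, the Cegrell-type energy and comparison inequalities confine the $u_j$ to a fixed weighted-energy class, whence $u:=\lim_j u_j\in\mathcal F\text{-}PSH^-(\Omega)$ and $u\not\equiv-\infty$.

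It then remains to pass to the limit in the equation, that is, to show $NP(dd^c u_j)^n\to NP(dd^c u)^n$, which combined with $\mu_j\nearrow\mu$ yields $NP(dd^c u)^n=\mu$ on $QB(\Omega)$. This last step is the main obstacle. In the plurifine framework the measures $NP(dd^c u_j)^n$ need not be Radon, i.e.\ need not be Euclidean locally finite, so the classical weak-$*$ continuity of the Monge--Amp\`ere operator along decreasing sequences is not available off the shelf and mass could in principle be lost in the limit. I expect to overcome this by combining the definition of $NP$ through the truncations $\max(u,-k)$ with the approximation of negative $\mathcal F$-plurisubharmonic functions by increasing sequences of ordinary plurisubharmonic functions of Trao--Viet--Hong \cite{TVH}, using the uniform energy bound above to rule out escape of mass, so that the identity $NP(dd^c u)^n=\mu$ can be verified directly on sets $A\in QB(\Omega)$.
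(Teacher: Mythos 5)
Your overall blueprint (cut $\mu$ into finite-mass pieces using sublevel sets of $\psi$, solve for each piece, use comparison to get a decreasing sequence, control the limit by $\int_\Omega(-\psi)\,d\mu$) matches the paper's strategy, but the proposal has a genuine gap at its central step. You write that for each finite measure $\mu_j$ vanishing on pluripolar sets one can ``invoke the existence theory'' of \cite{KW14} and \cite{Hong17} to produce $u_j$ with $NP(dd^c u_j)^n=\mu_j$. No such theorem is available: \cite{KW14} only defines the operator, and the result of \cite{Hong17} (Proposition 2.3 of the paper) solves $(dd^c u)^n=\nu$ only under the domination hypothesis $0\le\nu\le (dd^c w)^n$ for a \emph{finite} $\mathcal F$-plurisubharmonic $w$; finite total mass of $\mu_j$ does not give such a dominating function. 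Producing it is exactly the technical heart of the paper's proof: one embeds $\Omega\Subset\mathbb B(0,r)$, applies Cegrell's decomposition theorem (Theorem 6.2 in \cite{Ce2}) on the ball to write the restriction of $\mu$ to each ``annulus'' $\Omega\cap\{-\tfrac1k\le\psi<-\tfrac1{k+1}\}$ as $f_k(dd^c\psi_k)^n$ with $\psi_k\in\mathcal E_0(\mathbb B(0,r))$, truncates the densities to $\min(f_k,j)$, and checks that the resulting sum is dominated by $(dd^c\max(\sum_{k=1}^j\psi_k,\,a_j(j+1)\psi))^n$, which is what makes \cite{Hong17} applicable. Without this decomposition-plus-truncation device your existence step is essentially circular: it assumes the finite-mass case of the very theorem being proved.

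A second, smaller gap is the non-degeneracy of the limit. You assert that the uniform bound $\int_\Omega(-\psi)\,d\mu_j\le\int_\Omega(-\psi)\,d\mu$ ``confines the $u_j$ to a fixed weighted-energy class, whence $u\not\equiv-\infty$,'' but in the plurifine setting there is no compactness statement that converts a uniform energy bound directly into non-degeneracy of a decreasing limit. The paper needs a separate subextension lemma (Lemma 3.1): the envelopes $v_j$ over $G=\{\psi<-\tfrac12\}$ are extended to genuine plurisubharmonic functions $w_j\in\mathcal E_0(\mathbb B(0,r))$ with $(dd^c w_j)^n\le 1_\Omega(dd^c v_j)^n$, and only the classical theory on the Euclidean ball (uniformly bounded masses for a decreasing sequence in $\mathcal E_0(\mathbb B(0,r))$) yields a plurisubharmonic limit $w\not\equiv-\infty$ minorizing $u$ on $G$. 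Finally, for the passage to the limit in the equation you correctly identify the obstacle, but the clean route is the one the paper takes: Theorem 4.5 in \cite{KS14} gives $\mathcal F$-local vague convergence $(dd^c u_j)^n\to(dd^c u)^n$ on $\Omega\cap\{u>-\infty\}$, while the truncated densities $\min(f_k,j)(dd^c\psi_k)^n$ increase to $\mu$, and the identity $NP(dd^c u)^n=\mu$ on all of $QB(\Omega)$ then follows because both sides vanish on the pluripolar set $\{u=-\infty\}$. Your proposed alternative via \cite{TVH} approximation is not worked out and, as stated, does not rule out loss of mass.
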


The paper is organized as follows. In Section 2, we recall some notions of  plurifine   pluripotential theory.
Section 3 is devoted to prove  Theorem \ref{the1}. 
In Section 4,  we prove a result to  show that the condition \eqref{ii} in Theorem \ref{the1} is sharp.

\section{Preliminaries}
Some elements of  plurifine potential  theory  that will be used  throughout  the paper can be  found  in \cite{ACCH}-\cite{W12}.  First, we  recall  the following definitions (see \cite{K03},  \cite{KFW11},  \cite{MW10}, \cite{W12}).

\begin{definition} {\rm  
Let $\Omega$ be  an $\mathcal F$-open   subset of $\mathbb C^n$. A function $u : \Omega \to  [-\infty, +\infty) $  is said to be  $\mathcal F$-plurisubharmonic if $u$ is $\mathcal F$-upper semicontinuous and for every complex line $l$ in $\mathbb C^n$, the restriction of $u$ to any $\mathcal F$-component of the finely open subset $l \cap \Omega$  of $l$ is either finely subharmonic or $\equiv -\infty$.

The set of all negative  $\mathcal F\text{-}$plurisubharmonic functions defined in $\mathcal F\text{-}$open set $\Omega$ is denoted by $\mathcal F\text{-}  PSH^-(\Omega)$.
}\end{definition}

\begin{definition}{\rm 
Let $\Omega\subset \mathbb C^n$ be an $\mathcal F$-open set   and let $u  \in \mathcal F\text{-} PSH^-(\Omega)$. Denote by    $QB(\mathbb C^n)$  the measurable space on $\mathbb C^n$ generated by the Borel sets and the pluripolar subsets of  $\mathbb C^n$ and $QB(\Omega)$  is  the trace of $QB(\mathbb C^n)$ on $\Omega$.

(i) If $u $ is finite then there exist a pluripolar set $E\subset \Omega$, a sequence of $\mathcal F$-open subsets  $\{O_j\}$   and   
plurisubharmonic functions  $f_{j}, g_{j}$ defined in Euclidean neighborhoods of $\overline O_j$  such that   
 $\Omega=E \cup \bigcup_{j =1}^\infty O_j $ and   $u=f_{j} -g_{j}$ on $O_j $.
The Monge-Amp\`ere measure   $(dd^c u)^n$ on $QB(\Omega)$  is defined by  
$$
\int_A  (dd^c u)^n     := \sum_{j=1}^\infty \int_{A\cap ( O_j \backslash \bigcup_{k=1}^{j-1} O_k ) }  (dd^c ( f_{j} -g_{j}))^n, \ \ A\in QB(\Omega).
$$ 

(ii) The non-polar part $NP(dd^c u)^n$  is defined by
 $$
\int_A NP(dd^c u)^n = \lim_{j\to+\infty} \int_A  (dd^c \max(u,-j))^n, \ A\in QB(\Omega).
$$ 
}\end{definition}

\begin{proposition}
Let $\Omega$ be an $\mathcal F$-domain in $\mathbb C^n$ and let  $v,w\in \mathcal F\text{-}PSH(\Omega)$ be finite with  $w\leq -v$. 
Then, for every   measure $\mu$ on the  $QB(\Omega)$ with $0\leq \mu \leq (dd^c w)^n$, there exists a finite  plurifinely plurisubharmonic function $u$ defined on $\Omega$ such that   $w\leq u\leq -v$  and 
$$(dd^c u)^n =\mu \text{ in } QB(\Omega).$$
\end{proposition}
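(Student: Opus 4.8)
The plan is to realise $u$ as the largest subsolution that stays below the barrier $-v$. Since $\mu\le (dd^c w)^n$ and the Monge--Amp\`ere measure of a finite $\mathcal F$-plurisubharmonic function vanishes on pluripolar sets, $\mu$ too vanishes on pluripolar sets, and the Radon--Nikodym theorem lets us write $d\mu=f\,(dd^c w)^n$ with a Borel density $0\le f\le 1$; I keep this only to make the comparison estimates quantitative. Consider the family
$$\mathcal U=\bigl\{\varphi\in\mathcal F\text{-}PSH^-(\Omega):\ \varphi\le -v\ \text{ and }\ (dd^c\varphi)^n\ge\mu\ \text{ on }QB(\Omega)\bigr\}$$
and set $u=\bigl(\sup_{\varphi\in\mathcal U}\varphi\bigr)^{*}$, the $\mathcal F$-upper semicontinuous regularization. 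The hypotheses $w\le -v$ and $\mu\le (dd^c w)^n$ say precisely that $w\in\mathcal U$, so $\mathcal U\neq\varnothing$ and $w\le u\le -v$ (the upper bound holding off a pluripolar set), whence $u$ is finite, being squeezed between the finite functions $w$ and $-v$.

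First I would verify that $u$ is itself a subsolution, that is $u\in\mathcal U$. The family is stable under finite maxima: if $\varphi_1,\varphi_2\in\mathcal U$ then $\max(\varphi_1,\varphi_2)\le -v$, and the plurifine gluing inequality for the Monge--Amp\`ere operator of El Kadiri and Wiegerinck gives $(dd^c\max(\varphi_1,\varphi_2))^n\ge\mathbf 1_{\{\varphi_1\ge\varphi_2\}}(dd^c\varphi_1)^n+\mathbf 1_{\{\varphi_1<\varphi_2\}}(dd^c\varphi_2)^n\ge\mu$, so $\max(\varphi_1,\varphi_2)\in\mathcal U$. Thus $\mathcal U$ is upward directed, and by Choquet's lemma there is an increasing sequence $\varphi_j\in\mathcal U$ with $\bigl(\sup_j\varphi_j\bigr)^{*}=u$ quasi-everywhere. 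Passing to the limit along $\varphi_j\uparrow u$ with the monotone convergence theorem for the plurifine Monge--Amp\`ere operator yields $(dd^c u)^n\ge\mu$, so $u$ is the largest element of $\mathcal U$.

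The heart of the matter is the reverse inequality $(dd^c u)^n\le\mu$, which I would split according to the barrier. On the $\mathcal F$-open set $\{u<-v\}$ the constraint $\varphi\le -v$ is inactive, so $u$ satisfies the obstacle-free variational inequality there: if $(dd^c u)^n$ exceeded $\mu$ on some $QB$-set contained in $\{u<-v\}$, one could solve a local Dirichlet problem in the sense of Hong \cite{Hong17} and raise $u$ to a strictly larger element of $\mathcal U$, contradicting maximality; the comparison principle then gives $(dd^c u)^n=\mu$ on $\{u<-v\}$. On the contact set $\{u=-v\}$ I would use that $u+v$ is a finite $\mathcal F$-plurisubharmonic function with $u+v\le 0$ and $u+v=0$ exactly on the contact set, so the maximum principle forces $u=-v$ on the $\mathcal F$-interior of the contact set; there $(dd^c u)^n$ equals the Monge--Amp\`ere mass of the $\mathcal F$-plurisuperharmonic function $-v$, which is $\le 0$, hence $(dd^c u)^n=0$ and, combined with $(dd^c u)^n\ge\mu\ge 0$, forces $\mu=0$ and equality. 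Making this contact-set step fully rigorous in the plurifine category, in particular controlling the Monge--Amp\`ere mass on $\partial_{\mathcal F}\{u=-v\}$, is the step I expect to be the main obstacle; once it is settled, combining the two inequalities gives $(dd^c u)^n=\mu$ on all of $QB(\Omega)$ with $w\le u\le -v$, as required.
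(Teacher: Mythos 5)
The first thing to note is that the paper itself contains no proof of this proposition: its ``proof'' is the single line ``See \cite{Hong17}'', because the statement \emph{is} the main solvability theorem of that reference. This matters for your proposal, because at the decisive moment you invoke exactly that machinery: on the non-contact set $\{u<-v\}$ you propose to ``solve a local Dirichlet problem in the sense of Hong \cite{Hong17}'' and push $u$ up. That step is circular --- solving $(dd^c\,\cdot\,)^n=\mu$ with prescribed data on an $\mathcal F$-open set is, up to trivial reformulation, the very statement being proved. Nor can you escape the circularity by localizing on Euclidean balls as in the classical Perron/Ko\l odziej bump-up argument: a general $\mathcal F$-domain has empty Euclidean interior (the examples in Section 4 of the paper are precisely of this kind), so there are no balls inside $\Omega$ on which classical solvability could be applied, and $\mathcal F$-local solvability is exactly the hard content of \cite{Hong17}. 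So the heart of your proof is missing, not merely the contact-set step you flag at the end.

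The contact-set difficulty you acknowledge is also a genuine gap as written: $\{u=-v\}$ is $\mathcal F$-closed, your locality argument only kills $(dd^cu)^n$ on its $\mathcal F$-interior, and $\partial_{\mathcal F}\{u=-v\}$ need not be pluripolar, so nothing you say prevents $(dd^cu)^n$ from charging it. (This particular difficulty is in fact avoidable: since the obstacle $-v$ is $\mathcal F$-plurisuperharmonic, any local modification $\psi$ with boundary data $\le -v$ satisfies that $\psi+v$ is $\mathcal F$-plurisubharmonic and $\le 0$ on the relevant $\mathcal F$-boundary, so the maximum principle gives $\psi\le -v$ throughout the modification set; a correctly organized bump-up argument never sees the contact set at all. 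But this repair still presupposes the local solvability discussed above.) Two smaller unjustified points: members of your family $\mathcal U$ are not required to be finite, so $(dd^c\varphi)^n$ need not even be defined for them in the framework of \cite{KW14}; and the convergence theorem you use after Choquet's lemma --- continuity of the plurifine Monge--Amp\`ere operator along sequences increasing quasi-everywhere to a finite limit --- is not among the results cited in this paper (Theorem 4.5 of \cite{KS14}, which the paper does use, concerns decreasing sequences), so it would need its own proof or reference.
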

\begin{proof}
See \cite{Hong17}.
\end{proof}

\begin{definition}{\rm  Let $\Omega$ be an $\mathcal F \text{-}  $open set in $\mathbb C^n$ and let    $u\in \mathcal F \text{-}   PSH(\Omega)$.  We say that $u$  is $\mathcal F \text{-}  $maximal  in $\Omega$  if for every bounded $\mathcal F \text{-}  $open set $G$ of $\mathbb C^n$ with $\overline G\subset \Omega$, and for every function  $v\in \mathcal F \text{-}   PSH(G)$ that is bounded from above on $G$ and extends $\f$upper semicontinuously to $\overline{G}^{\mathcal F}$ with   $v\leq u$ on  $\partial_{\mathcal F} G$ implies   $v\leq u$ on $G$.  
}\end{definition}

\begin{proposition}
 If $u$ is  a finite $\mathcal F$-maximal $\mathcal F$-plurisubharmonic function defined  on an $\mathcal F$-domain then $(dd^c u)^n = 0$.
\end{proposition}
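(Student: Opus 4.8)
The plan is to argue by contradiction, using that $\mathcal F$-maximality makes $u$ the largest competitor subject to a boundary constraint, so that $u$ should carry the least possible Monge-Amp\`ere mass, namely none. First I would localize: since $(dd^c u)^n$ is a measure on $QB(\Omega)$ determined by its restrictions to the $\mathcal F$-open sets appearing in the definition of the operator, it suffices to show that $\int_G (dd^c u)^n=0$ for every bounded $\mathcal F$-open $G$ with $\overline G\subset\Omega$, these sets exhausting $\Omega$. Fix such a $G$ and suppose, for contradiction, that $\int_G (dd^c u)^n>0$.

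The next step is to produce a maximal competitor on $G$ by solving the homogeneous Dirichlet problem with boundary data $u$: I would construct $v\in\mathcal F\text{-}PSH(G)$, bounded from above and extending $\mathcal F$-upper semicontinuously to $\overline G^{\mathcal F}$, with $v=u$ on $\partial_{\mathcal F}G$ and $(dd^c v)^n=0$ in $QB(G)$. This is the plurifine counterpart of the classical Perron--Bremermann construction; concretely one takes $v$ to be the $\mathcal F$-upper semicontinuous regularization of the envelope of all $w\in\mathcal F\text{-}PSH(G)$ that are bounded above and satisfy $w\le u$ on $\partial_{\mathcal F}G$, and then checks that the envelope has vanishing Monge-Amp\`ere mass.

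With $v$ at hand, two inequalities close the argument. On the one hand, $v$ is an admissible competitor in the definition of $\mathcal F$-maximality of $u$, since it is bounded above, extends $\mathcal F$-upper semicontinuously to $\overline G^{\mathcal F}$, and satisfies $v\le u$ on $\partial_{\mathcal F}G$; hence maximality of $u$ gives $v\le u$ on $G$. On the other hand, because $v=u$ on $\partial_{\mathcal F}G$, the comparison principle for the plurifine Monge-Amp\`ere operator, in the form $\int_{\{v<u\}}(dd^c u)^n\le\int_{\{v<u\}}(dd^c v)^n$, together with $(dd^c v)^n=0$ forces the $(dd^c u)^n$-mass of $\{v<u\}$ to vanish, and the domination principle then yields $v\ge u$ on $G$. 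Therefore $u\equiv v$ on $G$, so $\int_G(dd^c u)^n=\int_G(dd^c v)^n=0$, contradicting our assumption and proving the claim.

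The logical skeleton is elementary; the main obstacle lies in the two analytic inputs, both sensitive to behavior at the $\mathcal F$-boundary. First, one must solve the homogeneous Dirichlet problem across $\partial_{\mathcal F}G$ with the stated boundary regularity, i.e.\ show that the Perron envelope attains the boundary data $u$ on $\partial_{\mathcal F}G$ and has zero Monge-Amp\`ere mass. Second, one needs the comparison and domination principles to hold for the generally non-Radon measure $(dd^c\cdot)^n$ on $QB(G)$. I expect the cleanest route to both is to pass through the increasing approximation of $u$ by plurisubharmonic functions on shrinking Euclidean subsets, reducing each inequality to its classical Bedford--Taylor counterpart and then returning to the plurifine setting by the convergence of the associated Monge-Amp\`ere measures.
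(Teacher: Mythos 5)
Your argument follows the classical Bedford--Taylor scheme (lift $u$ on a subdomain by solving the homogeneous Dirichlet problem, then use maximality to force equality), but the step that carries all the analytic content is exactly the one you defer to the end: the existence, on an \emph{arbitrary} bounded $\mathcal F$-open $G$ with $\overline G\subset\Omega$, of $v\in\mathcal F\text{-}PSH(G)$, bounded above, extending $\mathcal F$-upper semicontinuously to $\overline G^{\mathcal F}$, with $v= u$ on $\partial_{\mathcal F}G$ and $(dd^c v)^n=0$ on $QB(G)$. In the plurifine setting this is not an available tool, and your proposed route to it fails. The classical proof that the Perron--Bremermann envelope has vanishing Monge--Amp\`ere mass proceeds by lifting the envelope on small Euclidean balls contained in the domain (Bedford--Taylor solvability in balls); but an $\mathcal F$-open set need not contain \emph{any} Euclidean ball --- Section 4 of this very paper is built around $\mathcal F$-hyperconvex domains with empty Euclidean interior --- so there is no ball on which to invoke the classical Dirichlet problem, and the ``reduction to the classical counterpart on shrinking Euclidean subsets'' you sketch has nothing to reduce to. The other natural way to see that the envelope has zero mass is to observe that the envelope is itself $\mathcal F$-maximal and then apply ``$\mathcal F$-maximal $\Rightarrow (dd^c\cdot)^n=0$'' --- which is precisely the proposition you are trying to prove, i.e.\ circular. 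Boundary attainment is equally problematic: points of $\partial_{\mathcal F}G$ admit no barriers in general, so there is no reason the ($\mathcal F$-usc regularized) envelope takes the value $u$ there.

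Two further points. The comparison principle $\int_{\{v<u\}}(dd^c u)^n\le\int_{\{v<u\}}(dd^c v)^n$ and the domination principle you invoke are not established for the (generally non-Radon) plurifine operator on an arbitrary bounded $\mathcal F$-open set and merely finite functions; the only comparison-type statement in this paper (Proposition 2.7) requires a bounded $\mathcal F$-hyperconvex domain and a \emph{bounded} function in $\mathcal F_1(\Omega)$. This step is in any case superfluous in your set-up: $u$ restricted to $G$ belongs to the envelope family (it satisfies its own boundary constraint), so $v\ge u$ on $G$ is automatic. Finally, note that the paper does not prove this proposition at all: it simply cites Theorem 4.8 of El Kadiri and Smit \cite{KS14}. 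So your skeleton is the right classical heuristic, but the plurifine substitute for its key ingredient is missing, and supplying it is essentially the content of the cited theorem rather than a routine adaptation.
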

\begin{proof}
See Theorem 4.8 in \cite{KS14}.
\end{proof}

\begin{proposition}
Let $\Omega$ be an $\mathcal F$-open set in $\mathbb C^n$ and assume that $u\in \mathcal F \text{-} PSH(\Omega)$ is bounded. 
Then,  $u$ is  $\mathcal F$-maximal in $\Omega$
if and only if  $(dd^c u)^n =0$ on $QB(\Omega)$.
\end{proposition}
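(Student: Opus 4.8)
The plan is to prove the two implications separately. Note first that, since $u$ is bounded, $\max(u,-j)=u$ for all large $j$, so $NP(dd^c u)^n=(dd^c u)^n$ and all the measures occurring below are the ordinary Monge--Amp\`ere measures. \emph{Necessity} ($\mathcal F$-maximal $\Rightarrow (dd^c u)^n=0$): I would reduce to the case of an $\mathcal F$-domain. Decompose $\Omega=\bigsqcup_\alpha \Omega_\alpha$ into its $\mathcal F$-components; this family is at most countable, because distinct components are disjoint nonempty $\mathcal F$-open sets and every nonempty $\mathcal F$-open set has positive Lebesgue measure. Each $\Omega_\alpha$ is an $\mathcal F$-domain, and if $G$ is a bounded $\mathcal F$-open set with $\overline G\subset\Omega_\alpha$ then $\overline G\subset\Omega$, so the $\mathcal F$-maximality of $u$ on $\Omega$ forces $v\le u$ on $G$ for every competitor $v$ on $G$; hence $u|_{\Omega_\alpha}$ is $\mathcal F$-maximal on $\Omega_\alpha$. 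Since $u|_{\Omega_\alpha}$ is finite, the Proposition taken from \cite{KS14} gives $(dd^c u)^n=0$ on $QB(\Omega_\alpha)$, and $\sigma$-additivity over the countable partition yields $(dd^c u)^n(A)=\sum_\alpha (dd^c u)^n(A\cap\Omega_\alpha)=0$ for every $A\in QB(\Omega)$.

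\emph{Sufficiency} ($(dd^c u)^n=0 \Rightarrow \mathcal F$-maximal): let $G$ be a bounded $\mathcal F$-open set with $\overline G\subset\Omega$, and let $v$ be a competitor, i.e. $v\in\mathcal F\text{-}PSH(G)$ is bounded above, extends $\mathcal F$-upper semicontinuously to $\overline G^{\mathcal F}$, and $v\le u$ on $\partial_{\mathcal F}G$. Since $u$ is bounded below, replacing $v$ by $\max(v,m)$ with a constant $m<\inf_G u$ produces a competitor dominating $v$, so we may assume $v$ is bounded. Put $C=\sup_{\overline G}|z|^2<\infty$ and, for $\epsilon>0$, set $v_\epsilon=v+\epsilon(|z|^2-C)$; then $v_\epsilon$ is again a bounded competitor with $v_\epsilon\le v\le u$ on $\partial_{\mathcal F}G$. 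Applying the comparison principle of plurifine pluripotential theory to the pair $(u,v_\epsilon)$ on $G$ gives
$$\int_{\{u<v_\epsilon\}}(dd^c v_\epsilon)^n\le\int_{\{u<v_\epsilon\}}(dd^c u)^n=0 .$$
Expanding $(dd^c v_\epsilon)^n=(dd^c v+\epsilon\,dd^c|z|^2)^n$ into nonnegative terms yields $(dd^c v_\epsilon)^n\ge\epsilon^n(dd^c|z|^2)^n=c_n\epsilon^n\,d\lambda$ with $c_n>0$, a positive multiple of Lebesgue measure $\lambda$. Hence $\lambda(\{u<v_\epsilon\})=0$. As $u$ and $v_\epsilon$ are $\mathcal F$-continuous, the set $\{u<v_\epsilon\}$ is $\mathcal F$-open, and a nonempty $\mathcal F$-open set has positive Lebesgue measure; therefore $\{u<v_\epsilon\}=\emptyset$, i.e. $v_\epsilon\le u$ on $G$. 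Letting $\epsilon\to0$ gives $v\le u$ on $G$, so $u$ is $\mathcal F$-maximal.

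The delicate part is the sufficiency direction, and in particular transplanting two Euclidean Bedford--Taylor facts into the plurifine setting: the comparison inequality $\int_{\{u<w\}}(dd^c w)^n\le\int_{\{u<w\}}(dd^c u)^n$ for bounded $\mathcal F$-plurisubharmonic $u,w$ with $w\le u$ on $\partial_{\mathcal F}G$, and the domination $(dd^c v_\epsilon)^n\ge\epsilon^n(dd^c|z|^2)^n$. Both are classical when the Monge--Amp\`ere operator is a Radon measure, but here $(dd^c\cdot)^n$ is only a measure on $QB(G)$ and need not be Euclidean locally finite, so each step---the truncation of $v$, the additivity over $\{u<v_\epsilon\}$, the positivity inequality for $(dd^c v_\epsilon)^n$, and the fact that nonempty $\mathcal F$-open sets carry positive Lebesgue measure---must be verified within the $QB$-framework rather than quoted from the classical theory.
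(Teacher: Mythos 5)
The paper gives no argument for this proposition at all: its ``proof'' is the single citation to \cite{HHV17}. So your attempt must be measured against what that reference and the toolkit of this paper actually provide. Your necessity half is essentially fine and is the easy half: decomposition into $\mathcal F$-components (these are $\mathcal F$-open because the plurifine topology is locally connected --- a nontrivial theorem of El Marzguioui and Wiegerinck that you use silently), countability because pairwise disjoint nonempty $\mathcal F$-open sets have positive Lebesgue measure, restriction of $\mathcal F$-maximality to components, then Theorem 4.8 of \cite{KS14} (the paper's Proposition 2.5) on each component together with $\sigma$-additivity and the $\mathcal F$-locality of the Monge--Amp\`ere measure.

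The sufficiency half, however, contains a genuine gap, and it sits exactly where the whole difficulty of the statement lies. Your entire argument funnels through ``the comparison principle of plurifine pluripotential theory'',
$$\int_{\{u<w\}}(dd^c w)^n\le \int_{\{u<w\}}(dd^c u)^n,$$
for bounded $\mathcal F$-plurisubharmonic $u,w$ on a bounded $\mathcal F$-open set $G$ with $w\le u$ on $\partial_{\mathcal F}G$. No such theorem is available in this paper or in the references it builds on, and it cannot be obtained by ``verifying the classical proof in the $QB$-framework'': Bedford--Taylor's proof of this inequality uses decreasing approximation by continuous plurisubharmonic functions, quasicontinuity with respect to capacity, and convergence of total Monge--Amp\`ere masses, none of which is at one's disposal here --- a bounded $\mathcal F$-plurisubharmonic function need not admit any decreasing approximation by plurisubharmonic functions, and its Monge--Amp\`ere measure need not be Radon nor have finite total mass on $G$. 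This unavailability is precisely why the implication $(dd^c u)^n=0\Rightarrow \mathcal F\text{-maximal}$ is the substantive content of \cite{HHV17}: Theorem 1 there is proved not through a comparison principle but through the gluing/localization mechanism that is also visible in this paper's Lemma 3.1 --- one patches $\mathcal F$-plurisubharmonic functions into honest plurisubharmonic functions on Euclidean open sets via Proposition 2.3 of \cite{KS14} and Proposition 2.14 of \cite{KFW11}, uses the quasi-Lindel\"of property, and then invokes the Euclidean Bedford--Taylor theory. As written, then, your proof assumes a lemma that is at least as deep as the proposition itself; it is a faithful outline of the classical Euclidean argument, but in the plurifine setting it begs the question. (Two smaller points in your favor: the $\mathcal F$-continuity of $\mathcal F$-plurisubharmonic functions \cite{MW10} does make $\{u<v_\epsilon\}$ $\mathcal F$-open and does repair the boundary hypothesis that troubles the Euclidean analogue, and the lower bound $(dd^c v_\epsilon)^n\ge \epsilon^n(dd^c|z|^2)^n$ can plausibly be organized $\mathcal F$-locally from the definition in \cite{KW14}; but neither fills the main gap.)
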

\begin{proof}
See  \cite{HHV17}.
\end{proof}

We now recall   the definition of bounded $\mathcal F$-hyperconvex domain $\Omega$ and the class $\mathcal F_p(\Omega)$  which is similar to the class introduced in \cite{Ce98} for the case  of a bounded hyperconvex domain (see \cite{TVH}).

\begin{definition}  
{\rm 
(i) A   bounded $\mathcal F$-domain $\Omega$ in $\mathbb C^n$    is  called  $\mathcal F$-hyperconvex if there exist a negative bounded plurisubharmonic function $\gamma_\Omega$ defined in a bounded hyperconvex domain $\Omega'$ such that $\Omega =\Omega' \cap \{\gamma_\Omega >-1\}$ and $-\gamma_\Omega $ is $\mathcal F$-plurisubharmonic  in $\Omega$.

(ii) We say that  a bounded negative  $\mathcal F$-plurisubharmonic function $u$ defined on a bounded $\mathcal F$-hyperconvex domain $\Omega$ belongs to $\mathcal E_0(\Omega)$ if  $\int_\Omega (dd^c u)^n<+\infty$ and   for every $\varepsilon>0$  there exists $\delta>0$ such that  
$$
\overline{\Omega\cap  \{u<- \varepsilon\} } \subset  \Omega' \cap\{\gamma_\Omega >-1+\delta\}.
$$ 

(iii) Denote by  $\mathcal F_p (\Omega)$, $p>0$   the family of negative $\mathcal F$-plurisubharmonic functions $u$ defined on $\Omega$ such that there exist a decreasing sequence $\{u_j\} \subset \mathcal E_0(\Omega)$ that converges pointwise to $u$ on $\Omega$ and 
$$\sup_{j\geq 1} \int_\Omega (1+(-u_j)^p)  (dd^c u_j)^n<+\infty.$$
} \end{definition}


\begin{proposition} \label{lem0}
Let $\Omega \Subset \mathbb C^n$ be a bounded $\mathcal F$-hyperconvex domain. Assume that  $u\in \mathcal F_1  (\Omega)$ is bounded  and $v\in \mathcal F\text{-}PSH^-(\Omega)$ such that   $ (dd^c u)^n \leq  (dd^c v)^n$ in $\Omega \cap   \{v>-\infty\}$.  
Then, $u \geq v$ in $\Omega$. 
\end{proposition}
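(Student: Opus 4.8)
The plan is to argue by contradiction, combining the plurifine comparison principle with the standard strictly-plurisubharmonic perturbation trick. Suppose the set $\{u<v\}$ is nonempty. Since $\mathcal F$-plurisubharmonic functions are $\mathcal F$-continuous (El Marzguioui--Wiegerinck) and $u$ is finite, the function $v-u$ is $\mathcal F$-continuous, so $\{u<v\}=\{v-u>0\}$ is $\mathcal F$-open; and because a nonempty $\mathcal F$-open set has strictly positive Lebesgue measure, it will suffice to exhibit a subset of $\{u<v\}$ of positive Lebesgue measure that is forced to be null. Note also that $\{u<v\}\subset\{v>-\infty\}$, as $u$ is bounded, so the hypothesis on the Monge-Amp\`ere measures applies throughout this set.

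I first fix an auxiliary weight. Choosing $R$ with $\Omega\subset B(0,R)$, I set $\rho(z)=A(|z|^2-R^2)$ with $A>0$; then $\rho$ is a bounded plurisubharmonic (hence $\mathcal F$-plurisubharmonic) function with $\rho<0$ on $\Omega$ and $(dd^c\rho)^n=c_0\,dV$ for a constant $c_0>0$, where $dV$ is Lebesgue measure. For $\delta>0$ put $U_\delta=\{u<v+\delta\rho\}$. Since $\rho<0$ we have $U_\delta\subset\{u<v\}$ and $U_\delta\uparrow\{u<v\}$ as $\delta\downarrow0$, so for $\delta$ small enough $U_\delta$ has positive Lebesgue measure; I fix such a $\delta$.

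Next I remove the unboundedness of $v$ by truncation. Let $M$ bound $-u$ and, for $k>M$, set $v_k=\max(v,-k)$, a bounded $\mathcal F$-plurisubharmonic function. On $\{u<v_k+\delta\rho\}$ one has $v_k>u-\delta\rho\ge-M>-k$, whence $v_k=v$ there; this shows $\{u<v_k+\delta\rho\}=U_\delta$ and, by the locality of the Monge-Amp\`ere operator on $\mathcal F$-open sets, $(dd^c v_k)^n=(dd^c v)^n$ on $U_\delta$. I then apply the comparison principle to the two bounded $\mathcal F$-plurisubharmonic functions $u$ and $w:=v_k+\delta\rho$: since $u$ tends to $0$ at $\partial_{\mathcal F}\Omega$ (as $u\in\mathcal F_1(\Omega)$) while $v_k\le0$ and $\rho<0$, the boundary requirement $\liminf_{\mathcal F}(u-w)\ge0$ holds, so
\[
\int_{U_\delta}(dd^c w)^n\le\int_{U_\delta}(dd^c u)^n .
\]
Using $(dd^c w)^n\ge(dd^c v_k)^n+\delta^n(dd^c\rho)^n$ together with $(dd^c v_k)^n=(dd^c v)^n$, $(dd^c\rho)^n=c_0\,dV$ on $U_\delta$, and the hypothesis $(dd^c u)^n\le(dd^c v)^n$ on $U_\delta\subset\{v>-\infty\}$, this gives
\[
\int_{U_\delta}(dd^c v)^n+\delta^n c_0\,\mathrm{Leb}(U_\delta)\le\int_{U_\delta}(dd^c u)^n\le\int_{U_\delta}(dd^c v)^n .
\]
Since $u\in\mathcal F_1(\Omega)$ has finite total Monge-Amp\`ere mass, the right-hand integral is finite, so the common term cancels and $\mathrm{Leb}(U_\delta)=0$, contradicting the choice of $\delta$. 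Hence $\{u<v\}=\emptyset$ and $u\ge v$ on $\Omega$.

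I expect the main obstacle to be the comparison-principle inequality in the plurifine setting, namely $\int_{\{u<w\}}(dd^c w)^n\le\int_{\{u<w\}}(dd^c u)^n$ for bounded $\mathcal F$-plurisubharmonic $u,w$ with $u\ge w$ at the $\mathcal F$-boundary, which carries the whole argument; the remaining delicate points are the $\mathcal F$-continuity of $u,v$ (making $\{u<v\}$ genuinely $\mathcal F$-open), the positivity of Lebesgue measure on nonempty $\mathcal F$-open sets, and the boundary behaviour and finite mass of functions in $\mathcal F_1(\Omega)$, all of which are available from the cited plurifine potential theory.
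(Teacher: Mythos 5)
Your strategy (contradiction, the $\delta\rho$ perturbation, truncation of $v$, then a Bedford--Taylor-style comparison principle) has a genuine gap exactly at the point you flag as the main obstacle, and in two ways. First, the inequality $\int_{\{u<w\}}(dd^c w)^n\le\int_{\{u<w\}}(dd^c u)^n$ for $\mathcal F$-plurisubharmonic functions on a bounded $\mathcal F$-hyperconvex domain is not available off the shelf in the literature this paper draws on; in the plurifine setting this kind of comparison principle (which classically rests on quasi-continuity and integration by parts) is precisely the hard content, so your argument defers its entire difficulty to an unproved ingredient. Second, and more concretely, your verification of its boundary hypothesis is false: membership of $u$ in $\mathcal F_1(\Omega)$ does \emph{not} imply that $u$ tends to $0$ at $\partial_{\mathcal F}\Omega$. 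The class $\mathcal F_1(\Omega)$ consists of pointwise limits of \emph{decreasing} sequences from $\mathcal E_0(\Omega)$ with uniformly bounded weighted mass, and such limits can stay bounded away from $0$ near boundary points (already in the classical Cegrell setting: a bounded function of the form $\max\bigl(\sum_j c_j g(\cdot,a_j),-1\bigr)$ with poles $a_j$ accumulating at a boundary point lies in $\mathcal F_1$ but has negative $\limsup$ there). Consequently, even if one had a comparison principle for bounded $\mathcal F$-plurisubharmonic functions with a $\liminf$ boundary condition, your $u$ need not satisfy that condition, and the buffer $\delta\rho$ does not repair this: the defect of $u$ at the boundary is of fixed size, while $\delta$ must be taken small.

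For contrast, the paper's proof is designed to avoid exactly this issue: it never needs boundary behaviour of $u$, because it reduces to the \emph{weighted} comparison principle for the classes $\mathcal F_p(\Omega)$ (Proposition 4.4 in the cited paper of Trao, Viet and Hong), in which the boundary data are encoded in the class itself. After normalizing $-1\le u\le 0$, one sets $v_j:=\left(1+\tfrac1j\right)\left(v-\tfrac1j\right)$ and chooses $p>0$ so small that $j^p<1+\tfrac1j$; then the hypothesis $(dd^c u)^n\le (dd^c v)^n$ upgrades to the weighted inequality $(1+(-u)^p)(dd^c u)^n\le (1+(-v_j)^p)(dd^c v_j)^n$ on $\{v_j>-\infty\}$, the weighted comparison principle gives $u\ge v_j$, and letting $j\to\infty$ finishes. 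To complete your route you would first have to establish a plurifine Cegrell-type comparison principle for $\mathcal F_1(\Omega)$ against arbitrary negative $\mathcal F$-plurisubharmonic functions, which is a result of at least the same depth as the proposition you are trying to prove.
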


\begin{proof}
Without loss of generality  we can assume that $-1\leq u \leq 0$ on $\Omega$. Let $j \in \mathbb N^*$ and define 
$$
v_j:= (1+\frac{1}{j}) (v-\frac{1}{j}) \text{ in } \Omega.
$$
Choose $p>0$ such that 
$ j^p <  1+\frac{1}{j} $.
It is easy to see that 
\begin{align*}
(1+ (-u)^p) (dd^c u)^n 
\leq 2 (dd^c u)^n  
\leq 2 (dd^c v)^n 
\leq (1+ (-v_j)^p) (dd^c v_j)^n 
\text{ on } \Omega \cap   \{v_j>-\infty\}.
\end{align*}
Since $u$ is bounded, so $u\in \mathcal F_p (\Omega)$, and hence,  Proposition 4.4 in \cite{TVH} implies that 
$u \geq v_j$ in $\Omega$. 
Letting $j\to+\infty$ we conclude  that $u \geq v$ in $\Omega$. The proof is complete. 
\end{proof}

\section{The complex Monge-Amp\`ere equations}

\begin{lemma} \label{lem1}
Let $D $ be a bounded hyperconvex domain in $\mathbb C^n$  and let $ \Omega \Subset D$ be  a bounded $\mathcal F$-hyperconvex domain. 
Assume that $u \in \mathcal E_0(\Omega)  $ and define 
$$
w:=\sup\{ \varphi \in PSH^-(D): \varphi \leq u \text{ on } \Omega\}.
$$
Then, $w \in \mathcal E_0(D)$ and $(dd^c w)^n \leq 1_\Omega (dd^c u)^n$ in $D$. 
\end{lemma}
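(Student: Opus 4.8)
The plan is to read $w$ as a plurifine balayage---the largest negative plurisubharmonic minorant of $u$ on $D$---and to run the Cegrell--Zeriahi subextension scheme inside the plurifine category, using the maximality criterion of \cite{HHV17} in place of the Euclidean local Perron arguments at those places where the relevant sets are only $\mathcal F$-open.

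After rescaling I may assume $-1\le u\le 0$. The competing family is nonempty, since the constant $-1$ belongs to it (as $-1\le u$ on $\Omega$), so $-1\le w\le 0$; the upper semicontinuous regularization $w^*$ is then negative plurisubharmonic on $D$ and agrees with $w$ off a pluripolar set, whence $w^*\le u$ quasi-everywhere on $\Omega$. To obtain membership in $\mathcal E_0(D)$ I first build a boundary barrier: because $D$ is hyperconvex and $\overline\Omega\Subset D$, a negative plurisubharmonic exhaustion of $D$, rescaled so as to be $\le -1$ on $\overline\Omega$, lies in the competing family, and therefore $w^*\to 0$ at every point of $\partial D$; finiteness of the total Monge--Amp\`ere mass will then be a by-product of the measure estimate below.

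For the support of $(dd^c w^*)^n$ I split $D$ into three regions. On the Euclidean-open set $D\setminus\overline\Omega$ the constraint $\varphi\le u$ is vacuous, so a standard gluing argument shows $w^*$ is maximal and $(dd^c w^*)^n=0$ there. Inside $\Omega$, the fine continuity of $\mathcal F$-plurisubharmonic functions (\cite{MW10}) makes $G:=\{w^*<u\}\cap\Omega$ an $\mathcal F$-open set on which the envelope cannot be raised; hence $w^*$ is $\mathcal F$-maximal on $G$, and the bounded maximality criterion gives $(dd^c w^*)^n=0$ on $G$. On the contact set $\{w^*=u\}\subset\Omega$ the plurifine locality of the Monge--Amp\`ere operator yields $1_{\{w^*=u\}}(dd^c w^*)^n\le 1_{\{w^*=u\}}(dd^c u)^n$. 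It remains to control the fine boundary strip $\overline\Omega\setminus\Omega$, where I invoke the $\mathcal E_0(\Omega)$-decay of $u$ (the condition $\overline{\Omega\cap\{u<-\varepsilon\}}\subset\Omega'\cap\{\gamma_\Omega>-1+\delta\}$) to push the mass away from $\partial_{\mathcal F}\Omega$ and conclude that no mass sits there. Assembling the three pieces gives $(dd^c w^*)^n\le 1_\Omega(dd^c u)^n$ on $QB(D)$, and integrating, $\int_D(dd^c w^*)^n\le\int_\Omega(dd^c u)^n<+\infty$, so $w^*\in\mathcal E_0(D)$.

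The main obstacle is precisely the two plurifine steps inside $\Omega$: since $w^*$ is genuinely (Euclidean) plurisubharmonic while $G$ and the contact set are only $\mathcal F$-open, respectively $\mathcal F$-closed, the usual local bump/Perron construction is unavailable, and the maximality on $G$ together with the contact comparison must be carried out entirely within the plurifine framework, relying on the plurifine Dirichlet solvability and the maximality equivalences quoted in the excerpt. The boundary strip $\overline\Omega\setminus\Omega$ is the second delicate point, and it is here that the defining decay property of $\mathcal E_0(\Omega)$ is essential.
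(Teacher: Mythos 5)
Your decomposition (non-contact set where $w$ is maximal, contact set where the measures compare, plus a barrier for membership in $\mathcal E_0(D)$) is the same skeleton as the paper's, but at exactly the two steps you yourself flag as ``the main obstacle'' you assert the conclusion instead of proving it, and one of the justifications you do give is wrong. First, the contact-set inequality $1_{\{w=u\}}(dd^c w)^n\le 1_{\{w=u\}}(dd^c u)^n$ does \emph{not} follow from plurifine locality: locality compares Monge--Amp\`ere measures of two functions on an $\mathcal F$-open set where they \emph{coincide}, whereas the contact set $\Omega\cap\{w=u\}$ is only $\mathcal F$-closed. The paper proves this inequality by approximating $u$ by the decreasing sequence $\max(u,w+\tfrac1j)$ (which equals $w+\tfrac1j$ on an $\mathcal F$-neighborhood of the contact set, so locality applies to the \emph{approximants}), writing these functions as differences of genuinely plurisubharmonic functions on the Euclidean open set $\Omega'$ by means of $\gamma_\Omega$, and then invoking the Bedford--Taylor fine-topology convergence theorem \cite{BT87} together with Theorem 4.8 of \cite{KW14}. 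It is precisely here, to keep all test functions compactly supported in $\{\gamma_\Omega>-1+\delta\}$, that the $\mathcal E_0(\Omega)$ decay condition is used --- not, as you propose, to ``push mass away'' from the strip $\overline\Omega\setminus\Omega$.

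Second, ``the envelope cannot be raised, hence $w$ is $\mathcal F$-maximal on $\{w<u\}\cap\Omega$'' is not an argument. The obvious gluing fails: if $v$ is a maximality competitor on an $\mathcal F$-open $G$ with $\overline G\subset\{w<u\}\cap\Omega$ and $v\le w$ on $\partial_{\mathcal F}G$, the glued function $\max(w,v)$ need not satisfy $\max(w,v)\le u$ \emph{inside} $G$, so it need not be admissible for the envelope; and the classical fix (modification on small Euclidean balls, using continuity of the obstacle) is unavailable in the plurifine setting, as you yourself note --- yet you offer no substitute. The paper's substitute, which is the real content of this half of the lemma, is a level-slab argument: extend $u$ by $0$ to an $\mathcal F$-continuous function $h$ on $D$, and for each point of $U:=D\cap\{w<h\}$ choose $a$ with $w<a<h$ there and work on a connected component $V$ of $D\cap\{w<a\}\cap\{h>a\}$. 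Any glued competitor is then a global negative plurisubharmonic function on $D$ which, by the maximum principle, stays $<a<h$ on $\overline G\subset V$, hence \emph{is} admissible for the envelope and therefore lies below $w$. This yields $\mathcal F$-local $\mathcal F$-maximality of $w$ on $U$, and Theorem 1 of \cite{HHV17} converts that into $(dd^c w)^n=0$ on $U$; note that $U$ automatically contains all of $D\setminus\Omega$ (there $w<0=h$), so no separate treatment of $D\setminus\overline\Omega$ or of the strip is needed. Without these two arguments your text is a statement of the intended plan rather than a proof.
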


\begin{proof}
It is easy to see that $w \in \mathcal E_0(D)$. Without loss of generality we can assume that $- \frac{1}{2}\leq u < 0$ in $\Omega$, and hence, $- \frac{1}{2}\leq w < 0$ in $D$. 
First, we claim that 
\begin{equation} \label{eq999-21}
 (dd^c w)^n \leq  (dd^c u)^n \text{ on } \Omega \cap  \{w=u\}.
\end{equation}
Indeed, 
let $\Omega'$ be a bounded hyperconvex domain in $\mathbb C^n$ and let  $\gamma_\Omega \in PSH^-(\Omega') \cap L^\infty( \Omega')$ such that   $\Omega= \Omega' \cap \{\gamma_\Omega >-1\} $ and $-\gamma_\Omega  \in \mathcal F\text{-}PSH(\Omega)$. 
Choose $\varepsilon, \delta>0$ such that $\sup_\Omega w <-2\varepsilon$   and  
$$\overline{\Omega\cap  \{u<- \varepsilon\} } \subset  \Omega' \cap\{\gamma_\Omega >-1+  2 \delta\}.$$ 
Let $j$ be an integer number with $j \varepsilon >1$.
Proposition 2.3 in \cite{KS14} states that the functions 
$$f := 
\begin{cases}
\max(- \frac{1}{\delta}, u+  \frac{1}{\delta}  \gamma_\Omega ) & \text{ in } \Omega
\\ -  \frac{1}{\delta}& \text{ in } \Omega' \backslash \Omega
\end{cases}  
\text{ and }
f _j := 
\begin{cases}
\max(- \frac{1}{\delta}, \max(u, w+\frac{1}{2j}) +  \frac{1}{\delta}  \gamma_\Omega ) & \text{ in } \Omega
\\ -  \frac{1}{\delta}& \text{ in } \Omega' \backslash \Omega
\end{cases}  
$$  
are   $\mathcal F$-plurisubharmonic   and Proposition 2.14 in \cite{KFW11} states that $f , f_j \in  PSH(\Omega')$ because $\Omega' $ is a Euclidean open set.
Since   $u=f- \frac{1}{\delta }  \gamma_\Omega $, $\max(u, w +\frac{1}{2j}) =f_j - \frac{1}{\delta }  \gamma_\Omega $  in $\{ \gamma_\Omega  >-1+\delta \}$, by \cite{BT87} we have 
\begin{equation} \label{eq0-01}
\lim_{j \to +\infty} \int_\Omega \chi (dd^c \max(u, w+\frac{1}{j}))^n  = \int_\Omega \chi (dd^c u)^n
\end{equation} 
for every bounded $\mathcal F $-continuous function $\chi$ with compact support on $\{ \gamma_\Omega  >-1+\delta \}$.
Let $K\subset \Omega \cap \{w=u\}$ be a compact set. Since $\Omega \cap \{w=u\} \subset \Omega \cap  \{u<-\varepsilon\}  \subset \{ \gamma_\Omega  >-1+  2 \delta \}$, there exists    a decreasing sequence of bounded $\mathcal F $-continuous functions  $\{\chi_k\}$   with compact support on $\{ \gamma_\Omega  >-1+\delta \}$ such that $\chi_k \searrow 1_K$ as $k \nearrow +\infty$. By
Theorem 4.8 in \cite{KW14}  we conclude by \eqref{eq0-01} that 
\begin{align*}
\int_K    (dd^c w)^n 
\leq \lim_{j \to +\infty} \int_\Omega \chi _k(dd^c \max(u, w+\frac{1}{j}))^n  
= \int_\Omega \chi_k (dd^c u)^n, \ \forall k \geq 1.
\end{align*}
Letting $k\to+\infty$, we obtain that 
\begin{align*}
\int _K   (dd^c w)^n 
\leq   \int_K (dd^c u)^n .
\end{align*}
Therefore, 
$
 (dd^c w)^n \leq  (dd^c u)^n \text{ on } \Omega \cap  \{w=u\}.
$
This proves the claim.   
Now, since $u$ is $\mathcal F$-continuous on $\Omega$, it follows that  the function 
$$
h := 
\begin{cases}
u & \text{ on } \Omega
\\ 0 & \text{ in } D \backslash \Omega 
\end{cases}
$$
is $\mathcal F$-continuous on $D$, and hence,  
$$U:= D\cap \{w<h \} \text{  is }\mathcal F\text{-open set}.$$ 
Let $z\in U$ and let $a\in \mathbb R$ be such that $w(z) < a< h (z)$. Let $V$   be a connected component of the $\mathcal F$-open set $D\cap \{w<a\} \cap \{h >a\}$ which contains the point $z$. 
We claim that  $w $ is $\mathcal F$-maximal in $V$. 
Indeed, let 
$G$ be a bounded  $\mathcal F$-open set in  $\mathbb C^n$ with $\overline G\subset V$ and let   $v\in \mathcal F \text{-} PSH(G)$ such that $v$ is  bounded from above  on $G$, extends $\mathcal F$-upper semicontinuously to $\overline{G}^{\mathcal F}$ and    $v\leq w$ on  $\partial_{\mathcal F} G$. Since $D$ is a Euclidean open set,  Proposition 2.3 in \cite{KS14} and Proposition 2.14 in \cite{KFW11} imply that  the function
$$\varphi  := \begin{cases} 
\max(w , v  )  
& \text{ on } G 
\\ w & \text{ on } D \backslash G 
\end{cases}$$ 
is  plurisubharmonic in $D$.  
Because  $\overline{G} \subset V \subset D\cap \{w<a\}$, we infer that $\varphi <a $ on $\overline{G} $, and therefore, $\varphi \leq h$ in $D$. It follows that 
$\varphi =w$ in $D$. Thus, $v\leq w$ in $G$, and hence, $w$ is $\mathcal F$-maximal in $V$. This proves the claim. 
Therefore,   $w$ is $\mathcal F$-locally $\mathcal F$-maximal in $U$.  Theorem 1 in \cite{HHV17} implies that 
$$(dd^c w)^n =0 \text{  on } U.$$
Combining this with \eqref{eq999-21} we arrive at 
$(dd^c w)^n \leq 1_\Omega (dd^c u)^n$ in $D$. 
The proof is complete.
\end{proof}

We now able to give the proof of theorem \ref{the1}.

\begin{proof}[Proof of Theorem \ref{the1}]
Without loss of generality we can assume that  $\psi \in \mathcal E_0(\Omega)$ and $-1\leq \psi < 0$ in $\Omega$. 
Let $r>0$ be such that $\Omega \Subset \mathbb B(0,r)$. Let $j\geq 1$ be an integer number.
Since 
$$
\int_{\mathbb B(0,r)} 1_{\Omega \cap \{ - \frac{1}{j} \leq \psi< - \frac{1}{j+1}\}}  d\mu 
\leq (j+1) \int_\Omega (-\psi) d\mu <+\infty,
$$
Theorem 6.2 in \cite{Ce2} implies that there exist $\psi _j \in \mathcal E_0(\mathbb B(0,r))$ and $0\leq f_j \in L^1 ( (dd^c \psi_j)^n) $ such that 
$$
1_{\Omega \cap \{ - \frac{1}{j} \leq \psi< - \frac{1}{j+1}\}}  \mu 
= f_j (dd^c \psi_j)^n \text{ in } \mathbb B(0,r).
$$
Let $a_j>0$ be such that 
$$
\sum_{k=1}^j \psi_k > -a_j \text{ in } \mathbb B(0,r).
$$
Thanks to Theorem 4.8 in \cite{KW14}  we have
\begin{align*}
(dd^c \max(\sum_{k=1}^j \psi_k ,  a_j (j+1) \psi  ))^n 
& \geq 1_{\Omega \cap \{ \psi < -\frac{1}{j+1}\}} (dd^c (\sum_{k=1}^j \psi_k  ))^n 
\\& \geq   \sum_{k=1}^j 1_{\Omega \cap \{ -\frac{1}{k} \leq \psi < -\frac{1}{k+1}\}} (dd^c \psi_k )^n \geq \sum_{k=1}^j \min (f_k, j) (dd^c \psi_k)^n \
\text{ on } QB(\Omega).
\end{align*} 
By Theorem 1.1 in \cite{Hong17} we can find $u_j \in \mathcal F \text{-} PSH (\Omega)$ such that  $\max(\sum_{k=1}^j \psi_k ,  a_j (j+1) \psi  )  \leq u \leq 0$ on $\Omega$ and
$$
(dd^c u_j)^n =\sum_{k=1}^j \min (f_k, j) (dd^c \psi_k)^n \
\text{ on } QB(\Omega).
$$
Since $\max( \sum_{k=1}^j \psi_k ,  a_j (j+1) \psi  ) \in \mathcal E_0(\Omega)$,  
Proposition 3.4 in \cite{TVH} implies that $u_j \in \mathcal E_0(\Omega)$, and hence, 
Proposition \ref{lem0} states that $u_j \geq u_{j+1}$ in $\Omega$ because $(dd^c u_j)^n \leq (dd^c u_{j+1})^n $ on $QB(\Omega)$. 
Put
$$u:=\lim_{j\to+\infty} u_j \text{ on } \Omega.$$  
We claim that $u \not \equiv -\infty$ in $\Omega$. 
Indeed, without loss of generality we can assume that $G:=\{\psi <-\frac{1}{2}\} \neq \emptyset$. We set 
$$
v_j:= \sup\{ \varphi\in \mathcal F \text{-} PSH ^- (\Omega)  : \varphi \leq u_j \text{ on }G \}.
$$
Then,    $v_j \in \mathcal E_0(\Omega)$, $u_j \leq v_j <0$ in $\Omega$ and $v_j=u_j$ on $G$.
By Proposition 3.1 in \cite{KS14} we have 
$u_j$ is $\mathcal F$-maximal on $\Omega \cap \{\psi>-\frac{1}{2}\}$, and hence, $(dd^c v_j)^n=0$ on $\Omega \cap \{\psi>-\frac{1}{2}\}$.
Therefore, using Proposition 3.4 in \cite{TVH} we infer that 
\begin{equation} \label{eq1}
\begin{split}
\int_\Omega (dd^c v_j)^n 
& =\int_{\Omega \cap \{ \psi \leq - \frac{1}{2}\} } (dd^c v_j)^n 
\leq 2 \int_{\Omega } (- \psi )(dd^c v_j)^n 
\\& \leq 2 \int_{\Omega } (- \psi )(dd^c u_j)^n 
 = 2 \sum_{k=1}^j  \int_{\Omega } (- \psi ) \min (f_k, j) (dd^c \psi_k)^n
\\ & \leq  2 \sum_{k=1}^j  \int_{\Omega } (- \psi )  1_{\Omega \cap \{ -\frac{1}{k} \leq \psi < - \frac{1}{k+1} \}} d \mu  
\leq 2 \int_\Omega (-\psi) d\mu.
\end{split}
\end{equation}
Lemma \ref{lem1} states that the function 
$$
w_j:=\sup\{ \varphi \in PSH^-(\mathbb B(0,r)): \varphi \leq v_j \text{ on } \Omega\}
$$
belongs to  $  \mathcal E_0(\mathbb B(0,r))$ and 
$$(dd^c w_j)^n \leq 1_\Omega (dd^c v_j)^n \text{ in } \mathbb B(0,r).$$  
Hence, by the hypotheses and using \eqref{eq1}  we get 
\begin{align*}
\sup_j \int_{\mathbb B(0,r)} (dd^c w_j)^n 
\leq \sup_j \int_{\Omega} (dd^c v_j)^n 
\leq   2 \int_\Omega (-\psi) d\mu <+\infty.
\end{align*}
This implies that 
$w:=\lim_{j\to+\infty} w_j$ is plurisubharmonic function in $ \mathbb B(0,r) $.   
Thanks to Theorem 2.3 in  \cite{KFW11} we arrive at 
$\mathbb B(0,r) \cap \{ w=-\infty\}$ has no $\mathcal F$-interior point. Moreover, since $w\leq v =u $ on $G$
so $u\not \equiv -\infty$ on $G$. Thus, $u\not \equiv -\infty$ on $\Omega$. 
This proves the claim, and therefore, $u \in \mathcal F \text{-} PSH(\Omega)$.
Since $u_j \searrow u$ on $\Omega \cap \{u>-\infty\}$ as $j\nearrow +\infty$, by Theorem 4.5 in \cite{KS14}  we have  the sequence of measures $(dd^c u_j)^n$ converges $\mathcal F$-locally vaguely to $(dd^c u)^n$ on $\Omega \cap \{u>-\infty\}$.
Moreover, since 
$$
(dd^c u_j)^n =\sum_{k=1}^j \min (f_k, j) (dd^c \psi_k)^n \nearrow   \sum_{k=1}^\infty  f_k  (dd^c \psi_k)^n  =\mu \text{ on } \Omega  ,
$$
it follows that 
$$
(dd^c u)^n = \mu \text{ on } QB(\Omega \cap \{u>-\infty\} ) .
$$
Thus, 
$
NP(dd^c u)^n = \mu \text{ on } QB(\Omega) .
$
The proof is complete.
\end{proof}

\section{Measures without   solution  exists}


First we prove the following. 

\begin{proposition}
Let $\Omega $ be a   bounded $\mathcal F$-hyperconvex domain in $\mathbb C$ that has no Euclidean interior point exists.
Then, there exists a non-negative measure $\mu$ on $QB(\Omega)$ such that

(i) $\mu$ vanishes on all pluripolar subsets of $\Omega$;

(ii) $\int_\Omega (-\psi) d \mu <+\infty $, for some negative finely subharmonic function $\psi$ in $\Omega$;  

(iii)  There is no subharmonic function $w$ defined on Euclidean open neighborhood of $\Omega$ satisfying  $$
NP(dd^c w) =\mu \text{ on } QB(\Omega).
$$ 
\end{proposition}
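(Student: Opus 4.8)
The plan is to first convert the non-existence assertion (iii) into a local-finiteness obstruction and then to build $\mu$ violating it. Suppose $w$ is subharmonic on a Euclidean open set $U\supseteq\Omega$ and $NP(dd^cw)=\mu$ on $QB(\Omega)$. Being classically subharmonic, $w$ has Riesz measure $(dd^cw)$, which is a Radon measure on $U$, hence Euclidean locally finite, and the construction of the non-polar part gives $NP(dd^cw)\le(dd^cw)$ on $QB(\Omega)$. Consequently, for any $z_0\in\Omega\subseteq U$ we may choose $\rho>0$ with $\overline{B(z_0,\rho)}\subseteq U$, and then $\mu(B(z_0,\rho))\le(dd^cw)(B(z_0,\rho))<+\infty$. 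Thus it suffices to produce a non-negative measure $\mu$ on $QB(\Omega)$ obeying (i) and (ii) for which $\mu(B(z_0,\rho))=+\infty$ for some $z_0\in\Omega$ and all $\rho>0$.

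Write $\Omega=\Omega'\cap\{\gamma_\Omega>-1\}$ as in the definition of $\mathcal F$-hyperconvexity and set $\psi:=-1-\gamma_\Omega$. Since $-\gamma_\Omega$ is $\mathcal F$-plurisubharmonic and $-1<\gamma_\Omega<0$ on $\Omega$, the function $\psi$ is a bounded negative finely subharmonic function on $\Omega$ with $-\psi=1+\gamma_\Omega\in(0,1)$. The geometric heart of the proof is the assertion that for every $z_0\in\Omega$ and all $\varepsilon,\rho>0$ the set $\{-\psi<\varepsilon\}\cap\Omega\cap B(z_0,\rho)$ is non-polar. To prove it I would argue as follows. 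As $\Omega$ has empty Euclidean interior, $B(z_0,\rho)\not\subseteq\Omega$, so $B(z_0,\rho)\cap\Omega$ is a non-empty proper $\mathcal F$-open subset of the ball; since a Euclidean ball is finely connected, $\partial_{\mathcal F}\Omega$ must meet $B(z_0,\rho)$. Choose $b\in\partial_{\mathcal F}\Omega\cap B(z_0,\rho)$; shrinking $\rho$ we may assume $B(z_0,\rho)\subseteq\Omega'$, whence $b\in\Omega'$ and $\gamma_\Omega(b)\le-1$. As $b$ is an $\mathcal F$-limit of points of $\Omega$ and $\gamma_\Omega$ is finely continuous, $\gamma_\Omega(b)=-1$; hence $\{-\psi<\varepsilon\}=\{\gamma_\Omega<-1+\varepsilon\}$ is an $\mathcal F$-neighbourhood of $b$. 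Because $b\in\partial_{\mathcal F}\Omega$, the set $\Omega$ is non-thin at $b$; removing the set $\{-\psi\ge\varepsilon\}$, which is thin at $b$, and intersecting with the Euclidean neighbourhood $B(z_0,\rho)$ of $b$ preserves non-thinness at $b$. As a set non-thin at a point is non-polar, the claim follows.

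Granting this, fix $z_0\in\Omega$, put $\varepsilon_k:=2^{-k}$, pick $\rho_k\downarrow0$, and let $A_k:=\{-\psi<\varepsilon_k\}\cap\Omega\cap B(z_0,\rho_k)$, which is non-polar. Choose a compact non-polar $K_k\subseteq A_k$ and a finite measure $\mu_k$ supported on $K_k$ with $\mu_k(K_k)=1$ that vanishes on all polar sets (for instance a measure of finite energy), and define $\mu:=\sum_{k\ge1}\mu_k$ on $QB(\Omega)$. Then (i) holds since each $\mu_k$ vanishes on polar sets. For (ii), $-\psi<\varepsilon_k$ on $K_k$ gives
\[
\int_\Omega(-\psi)\,d\mu=\sum_{k\ge1}\int_{K_k}(-\psi)\,d\mu_k\le\sum_{k\ge1}\varepsilon_k<+\infty .
\]
Finally, for every $\rho>0$ and all $k$ with $\rho_k\le\rho$ one has $K_k\subseteq B(z_0,\rho)$, so $\mu(B(z_0,\rho))\ge\sum_{k:\rho_k\le\rho}\mu_k(K_k)=+\infty$. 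By the reduction, no subharmonic $w$ defined on a Euclidean neighbourhood of $\Omega$ can satisfy $NP(dd^cw)=\mu$ on $QB(\Omega)$, which is (iii).

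The step I expect to be the main obstacle is the geometric claim, and within it the passage from the empty Euclidean interior of $\Omega$ to the Euclidean density of $\partial_{\mathcal F}\Omega$ inside $\Omega$. This leans on two inputs from fine potential theory that must be cited with care: the fine connectedness of Euclidean balls in $\mathbb C$, used to locate $\mathcal F$-boundary points arbitrarily Euclidean-close to any point of $\Omega$, and the implication that a set non-thin at a point is non-polar. A lesser point to pin down is the inequality $NP(dd^cw)\le(dd^cw)$ for a classically subharmonic $w$, which follows from the definition of the non-polar part together with \cite{KW14}.
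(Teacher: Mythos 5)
Your proof is correct, and while it runs on the same engine as the paper's --- take $\psi=-1-\gamma_\Omega$ and build $\mu$ as a countable sum of measures of mass comparable to $1$, the $k$-th supported where $1+\gamma_\Omega<2^{-k}$, all supports accumulating at one point, so that $\int_\Omega(-\psi)\,d\mu<+\infty$ while $\mu$ has infinite mass on every ball around the accumulation point, contradicting local finiteness of the Riesz measure of any classical subharmonic solution --- the implementation is genuinely different. The paper manufactures its atoms potential-theoretically: it picks $a_j\in\Omega$ with $\gamma_\Omega(a_j)$ increasing to $-1$ and $a_j\to a\in\partial_{\mathcal F}\Omega\cap\Omega'$, takes potentials $u_j$ with $dd^cu_j=\delta_{a_j}$ (Theorem 4.14 of \cite{ACCH}), and replaces each by a reduced function $u_{j,k_j}$ of $\max(u_j,-k_j)$ over a small fine neighbourhood of $a_j$, tuning $k_j$ so that $dd^cu_{j,k_j}$ has mass in $[\frac{1}{2},1]$, is supported in $\Omega\cap\{\gamma_\Omega<-1+2^{-j}\}$, and vanishes on polar sets; $\mu$ is the sum of these Riesz measures. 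You instead take normalized finite-energy (equilibrium) measures on compact non-polar subsets $K_k$ of $\{1+\gamma_\Omega<2^{-k}\}\cap\Omega\cap B(z_0,\rho_k)$, and the real work is your non-polarity claim, which you prove correctly via fine connectedness of the ball \cite{F72}, non-thinness of $\Omega$ at its fine boundary points, and stability of non-thinness under removal of thin sets. Your route is more elementary, and it buys one genuine improvement: your concentration point $z_0$ lies in $\Omega$, so every Euclidean open $U\supseteq\Omega$ contains a closed ball around it, whereas the paper concentrates at $a\in\partial_{\mathcal F}\Omega\subset\overline{\Omega}\setminus\Omega$, and its final step $\mathbb B(a,r)\Subset O$ tacitly assumes the given neighbourhood contains $a$, which an open set containing $\Omega$ need not do. What the paper's route buys is explicitness: its atoms are Riesz measures by construction, with no appeal to capacitability.

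Two points to tighten, neither fatal. Extracting a compact non-polar $K_k\subset A_k$ needs $A_k$ capacitable; this holds since $\gamma_\Omega$ is Borel, so $A_k$ is Borel and Choquet's theorem applies. And the inequality $NP(dd^cw)\le dd^cw$ should be invoked in the indicator form underlying \cite{KW14}, namely $NP(dd^cw)=\lim_j 1_{\{w>-j\}}\,dd^c\max(w,-j)=1_{\{w>-\infty\}}\,dd^cw$ for classical subharmonic $w$ (by fine locality of the Riesz measure); the unindicated limit of $(dd^c\max(w,-j))(A)$ can strictly exceed $(dd^cw)(A)$ --- take $w=\log|z|$ and $A$ the punctured disc. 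Alternatively, bound $\mu(B(z_0,\rho))\le\limsup_j\,(dd^c\max(w,-j))(\overline{B(z_0,\rho)})\le(dd^cw)(\overline{B(z_0,\rho)})<+\infty$ by vague convergence. The paper's own final display makes the same leap, so this is a matter of careful citation, not of substance.
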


\begin{proof}
Let  $\gamma_\Omega$ be  a negative bounded subharmonic function defined in a bounded hyperconvex domain $\Omega'$ such that  $\Omega =\Omega' \cap \{\gamma_\Omega >-1\}$ and  $-\gamma_\Omega $ is   finely subharmonic  in $\Omega$.
Since $\Omega$ has no Euclidean interior point exists, we can find $\{a_j \}  \subset \Omega$ such that $a_j\to a \in \partial _{\mathcal F} \Omega \cap \Omega'$ and  
$$ -1< \gamma_\Omega (a_{j+1}) < \gamma_\Omega (a_j) < -1+ \frac{1}{2^{j} }.$$  
Theorem 4.14 in \cite{ACCH} implies that there exist $u_j \in \mathcal F(\Omega')$ be such that 
$$
dd^c u_j   = \delta _{a_j} \ \text{ in  } \Omega',
$$
where  $\delta_{a_j}$ denotes  the Dirac measure at $a_j$.  
Let $r_j\in (0, \frac{1}{2^j})$ and let $\varphi_j \in SH^-(\mathbb B(a_j, 2 r_j))$ be such that $\varphi_j (a_j) >-1$ and 
$$ \mathbb B(a_j, 2r_j ) \cap \{\varphi_j >-2\} \subset  \Omega \cap \{\gamma_\Omega (a_{j+1}) < \gamma_\Omega < -1+ \frac{1}{2^{j} } \}.$$ 
We set 
$$
u_{j,k}:= \sup \{ \varphi \in SH^-(\Omega') : \varphi \leq \max( u_j ,-k)  \text{ on  }  \mathbb B(a_j,r_j) \cap \{\varphi_j >-1\}  \} .
$$
Since $\mathbb B(a_j,r_j) \cap \{\varphi_j>-1\}$ is $\mathcal F$-open set, by Corollary 3.10 in \cite{KFW11} we infer that 
$u_{j,k} \in  SH^-(\Omega')$.
Proposition 3.2 in \cite{KS14} implies that   $u_{j, k_j}$ is $\mathcal F$-maximal on the $\mathcal F$-interior   of $\Omega' \backslash ( \mathbb B(a_j,r_j) \cap \{\varphi_j >-1\} )$, and hence,   
by Theorem 4.8 in \cite{KS14} (also see \cite{HHV17}) we get 
\begin{equation} \label{8.42.12.12}
 dd^c u_{j, k_j}   =0  \ \text{ on } \Omega' \backslash ( \overline{ \mathbb B} (a_j,r_j) \cap \{\varphi_j \geq -1\} ).
\end{equation}

We now claim that $u_{j,k} \searrow u_j$ in $\Omega'$ as $k\nearrow +\infty$. 
Indeed, since $u_j \leq u_{j,k+1} \leq u_{j,k} $ in $\Omega'$ so  $v_j:= \lim_k u_{j,k} \in SH^-(\Omega')$ and   $v_j \geq u_j$. Because  $u_j=v_j$ on $ \mathbb B(a_j,r_j) \cap \{\varphi_j >-1\}$,  by  Lemma 4.1 in \cite{ACCH} and  
Theorem 1.1 in \cite{HHiep16}  we get
\begin{align*}
 dd^c v_{j} 
\leq    dd^c u_{j} 
=1 _{\{a_j\}} dd^c u_{j} 
=1 _{\{a_j\}} dd^c v_{j} 
\leq  dd^c v_{j}  \ \text{ in } \Omega'.
\end{align*}
This implies that  $ dd^c v_{j}  =  dd^c u_{j}  = \delta_{a_j}$ in $\Omega'$. 
According to Theorem 3.6 in \cite{ACCH} we have 
$v_j=u_j$ in $\Omega'$, and hence, $u_{j,k} \searrow u_j$ on $\Omega'$ as $k\nearrow +\infty$. This proves the claim.  Therefore, Corollary 3.4 in \cite{ACCH} implies that 
\begin{align*}
\lim_{k\to+\infty} \int _{\Omega'} dd^c u_{j,k}
=  \int _{\Omega'} dd^c u_{j} = 1.
\end{align*}
Let  $k_j\geq 1$ be such that  
\begin{equation} \label{8.43.12.12}
 \frac{1}{2} \leq  \int _{\Omega'} dd^c u_{j, k_j}  \leq  1 .
\end{equation}
We set $\psi := -1-\gamma_\Omega  $ and 
$$\mu:= \sum_{j \geq 1} dd^c u_{j, k_j}   \text{ on } QB(\Omega ).
$$ 
Then, $\psi$ is negative finely subharmonic function on $\Omega$ and $\mu$ vanishes on all pluripolar subsets of $\Omega$.
Since 
$$ \overline{ \mathbb B} (a_j,r_j) \cap \{\varphi_j \geq -1\} 
\subset \mathbb B  (a_j, 2 r_j) \cap \{\varphi_j > -2\}
\subset 
 \Omega \cap    \{ \gamma_\Omega < -1+ \frac{1}{2^{j} } \}   ,$$
 by   \eqref{8.42.12.12} and \eqref{8.43.12.12} we arrive at 
\begin{align*}
\int_\Omega (-\psi ) d \mu 
& =\sum_{j\geq 1} \int_\Omega (1+ \gamma_\Omega )  dd^c u_{j,k_j} 
\\ & =\sum_{j\geq 1} \int_{ \Omega  \cap   \{    \gamma_\Omega <  -1+ \frac{1}{2^{j} } \}}  (1+ \gamma_\Omega )  dd^c u_{j,k_j} 
\\& \leq \sum_{j\geq 1}  \frac{1}{2^{j} } \int_{ \Omega' }  dd^c u_{j,k_j}  
\leq 1.
\end{align*}
We now assume that there exists a  subharmonic function $w$ defined on Euclidean open neighborhood $O$ of $\Omega$ such that 
$$NP(dd^c w)  =\mu \ \text{ on } QB(\Omega).$$
Let $r>0$ and let $j_0 \in \mathbb N$ be such that $\mathbb B(a_j, 2 r_j) \Subset \mathbb B(a,r) \Subset O$ for all $j\geq j_0$.  
Again by \eqref{8.42.12.12} and \eqref{8.43.12.12} we get 
\begin{align*}
+\infty
& > \int_{\mathbb B(a,r)}  dd^c w 
 \geq \int_{\mathbb B(a,r) \cap \Omega}  d \mu  
\\ & \geq \sum_{j\geq j_0} \int_{\overline {\mathbb B}(a_j,r_j) \cap \{\varphi_j \geq -1\} }  dd^c u_{j,k_j}
\\ & = \sum_{j\geq j_0} \int_{ \Omega '}  dd^c u_{j,k_j}
 \geq \sum_{j\geq j_0}    \frac{1}{2}
=+\infty.
\end{align*}
This is impossible. The proof is complete. 
\end{proof}

We now recall that a  negative plurisubharmonic functions $u$ defined on bounded hyperconvex domain $\Omega'$ belongs to $\mathcal F(\Omega')$ if  there exist a decreasing sequence $\{\varphi _j\} \subset \mathcal E_0(\Omega')$ that converges pointwise to $u$ on $\Omega'$ and 
$$\sup_{j\geq 1} \int_{ \Omega '} (dd^c \varphi_j )^n<+\infty.$$ 
The following  result  shows that  the condition \eqref{ii} in Theorem \ref{the1} is sharp.

\begin{proposition} 
Let $n$ be an integer number with  $n\geq 2$. Then, there exist a bounded $\mathcal F$-hyperconvex domain $\Omega \subset \mathbb C^n$ and a non-negative measure $\mu$ on $QB( \Omega)$  such that 

(i) $\Omega$ has no Euclidean interior point exists;

(ii) $\mu$ vanishes on all pluripolar subsets of $ \Omega$;

(iii) $\int_{  \Omega}(-\psi )^p d\mu<+\infty \text{ for all } p>1$, for some $ \psi \in \mathcal F \text{-} PSH^-(\Omega)$;

(iv) There is no function $w\in\mathcal  F \text{-} PSH^-(  \Omega )$ satisfying $NP(dd^c w)^n=\mu$ on $QB( \Omega)$. 
\end{proposition}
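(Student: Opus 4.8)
The plan is to produce the example by transplanting the planar construction of the previous proposition into $\mathbb C^n$ and recalibrating the boundary scale so that the first moment against the natural weight diverges while all moments of order $p>1$ stay finite. Concretely, I would first fix a bounded hyperconvex $\Omega'\subset\mathbb C^n$ and a negative bounded $\gamma_\Omega\in PSH^-(\Omega')$ with $\Omega=\Omega'\cap\{\gamma_\Omega>-1\}$ and $-\gamma_\Omega\in\mathcal F\text{-}PSH(\Omega)$, arranged (as in the planar case, e.g. by taking a product of the planar example with a ball, or directly) so that $\Omega$ has empty Euclidean interior; this gives (i). I would then choose $a_j\to a\in\partial_\mathcal F\Omega\cap\Omega'$ with $-1<\gamma_\Omega(a_{j+1})<\gamma_\Omega(a_j)<-1+\tfrac1j$, so that the weight $\psi:=-1-\gamma_\Omega$ satisfies $-\psi(a_j)=1+\gamma_\Omega(a_j)\asymp\tfrac1j$ along the sequence.

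For the measure I would replace $dd^c$ by $(dd^c)^n$ throughout. Using the $\mathbb C^n$ analogue of Theorem 4.14 in \cite{ACCH} (the pluricomplex Green function) I take $u_j$ with $(dd^c u_j)^n=\delta_{a_j}$, truncate by auxiliary $\varphi_j$ exactly as before, and form the balayage $u_{j,k}=\sup\{\varphi\in PSH^-(\Omega'):\varphi\le\max(u_j,-k)\text{ on }\mathbb B(a_j,r_j)\cap\{\varphi_j>-1\}\}$. As in \eqref{8.42.12.12}, $u_{j,k}$ is $\mathcal F$-maximal off the non-pluripolar contact set $\overline{\mathbb B}(a_j,r_j)\cap\{\varphi_j\ge-1\}\subset\Omega\cap\{\gamma_\Omega<-1+\tfrac1j\}$, so $(dd^c u_{j,k})^n$ is carried by that set and vanishes on pluripolar sets. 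Choosing $k_j$ with $\tfrac12\le\int(dd^c u_{j,k_j})^n\le1$ and setting $\mu=\sum_j(dd^c u_{j,k_j})^n$, I obtain (ii) at once, while the localisation $-\psi\asymp\tfrac1j$ on the $j$-th piece yields $\int(-\psi)^p\,d\mu\le\sum_j j^{-p}<\infty$ for every $p>1$, which is (iii), and simultaneously $\int(-\psi)\,d\mu\ge\sum_j\tfrac1{2(j+1)}=+\infty$. Thus the first moment diverges although every higher moment converges; this is the only place where the recalibration of the boundary scale from $2^{-j}$ to $j^{-1}$ (compared with the planar proposition) is used.

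For (iv) I argue by contradiction: suppose $w\in\mathcal F\text{-}PSH^-(\Omega)$ satisfies $NP(dd^c w)^n=\mu$. On $\Omega\cap\{w>-\infty\}$ one has $(dd^c w)^n=NP(dd^c w)^n=\mu\ge(dd^c u_{j,k_j})^n$, and each $u_{j,k_j}$ is bounded and lies in $\mathcal F_1(\Omega)$, so Proposition \ref{lem0} forces $w\le u_{j,k_j}$ on $\Omega$ for every $j$. Since $w$ thus lies below the explicit finite–energy functions $u_{j,k_j}$ and all moments $\int(-\psi)^p\,d\mu$ with $p>1$ are finite, feeding this into the $\mathcal F_p$-energy estimates of \cite{TVH} (the analogues of the chain \eqref{eq1}) should place the solution in the finite–energy class $\mathcal F_1(\Omega)$, whence $\int_\Omega(-\psi)\,d\mu=\int_\Omega(-\psi)\,NP(dd^c w)^n<+\infty$. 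This contradicts the divergence of the first moment established above, and therefore no such $w$ exists.

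The hard part is precisely this energy step: showing that solvability together with finiteness of all moments of order $p>1$ forces the putative solution into $\mathcal F_1(\Omega)$, equivalently that the first weighted energy $\int(-\psi)\,NP(dd^c w)^n$ is automatically finite for a solution. This is where the hypothesis $n\ge2$ enters and where the present statement genuinely strengthens the planar proposition: there the obstruction only excluded a \emph{Euclidean} subharmonic solution through the local mass count $\int_{\mathbb B(a,r)}dd^c w<+\infty$, whereas here one must exclude \emph{every} $\mathcal F$-plurisubharmonic solution, for which the naive count is unavailable (the total non-polar mass $\mu(\Omega)$ is infinite) and must be replaced by the weighted finite–energy comparison. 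Carrying this out rigorously—verifying $u_{j,k_j}\in\mathcal E_0(\Omega)$, applying Proposition \ref{lem0} to the truncations $\max(w,-t)$, and closing the $\mathcal F_p$-estimates so as to yield finiteness of the first moment—is the main obstacle I anticipate.
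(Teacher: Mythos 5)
There is a genuine gap, and it sits exactly where you flagged it: the ``energy step'' in your argument for (iv). Your plan is: solvability of $NP(dd^c w)^n=\mu$ plus finiteness of all $p$-moments ($p>1$) should force $w\in\mathcal F_1(\Omega)$, hence $\int_\Omega(-\psi)\,NP(dd^c w)^n<+\infty$ for your particular $\psi=-1-\gamma_\Omega$, contradicting the divergence of the first moment. But this implication rests on a false general principle: there is no reason why a solution of a Monge--Amp\`ere equation should have finite energy, and finiteness of the first moment is a property of the pair $(\mu,\psi)$, not of solvability. Indeed, Theorem \ref{the1} itself shows that a measure can be solvable while $\int(-\psi')\,d\mu=+\infty$ for many weights $\psi'$; solvability only requires \emph{one} weight with finite moment. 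Consequently, for your construction to satisfy (iv) you would have to prove that \emph{every} $\psi'\in\mathcal F\text{-}PSH^-(\Omega)$ has $\int_\Omega(-\psi')\,d\mu=+\infty$ (otherwise Theorem \ref{the1} would actually \emph{produce} a solution and (iv) would fail). Your measure is a sum of unit-size masses carried by sets shrinking geometrically toward a single plurifine boundary point $a$, and nothing in your argument excludes an $\mathcal F$-plurisubharmonic weight tending to $0$ at $a$ fast enough along those sets; you only check your one weight $\psi$. (There is also a smaller flaw: your choice $-1<\gamma_\Omega(a_{j+1})<\gamma_\Omega(a_j)<-1+\tfrac1j$ gives only the upper bound $-\psi<\tfrac1j$ on the $j$-th support, so the claimed divergence $\int(-\psi)\,d\mu\ge\sum_j\tfrac1{2(j+1)}$ needs an additional two-sided normalization such as $\gamma_\Omega(a_j)>-1+\tfrac1{2j}$.)

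The paper avoids this difficulty entirely by a different construction in which non-existence follows from the comparison principle alone, with no energy estimate on the putative solution. It takes $\Omega=D\times\Delta^{n-1}$ (product of a planar $\mathcal F$-hyperconvex domain without Euclidean interior and a polydisc) and
$$
\mu:=\sum_{k\ge1}\bigl(dd^c w_k\bigr)^n,\qquad w_k(t,z):=\max\bigl(2^k u(t),\,k v(z),\,-1\bigr),
$$
where $dd^c u=\delta_a$ on $D$ and $(dd^c v)^{n-1}=\delta_o$ on $\Delta^{n-1}$. The product property of the Monge--Amp\`ere operator gives the $p$-moment bound $\sum_k 2^{-k(p-1)}k^{n-1}<+\infty$ for $p>1$ (this is also where $n\ge2$ and the calibration $2^k$ versus $k$ really enter, rather than through any energy dichotomy). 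For (iv), since $\mu\ge(dd^c w_k)^n$ and each $w_k\in\mathcal F_1(\Omega)$ is bounded, Proposition \ref{lem0} forces any solution $w$ to satisfy $w\le w_k$ for every $k$; because $w_k\searrow-1$, this gives $w\le-1$. The key point your approach has no analogue of is the bootstrap: $NP(dd^c\,\cdot)^n$ is unchanged by adding a constant, so $w+1$ is again a negative solution, whence $w\le-2$, and by induction $w\equiv-\infty$, a contradiction. Your comparison functions $u_{j,k_j}$ do not decrease to a constant, so no such bootstrap is available, and the argument cannot be closed along the route you propose.
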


\begin{proof}
Let $\Delta$ be a unit disc in $\mathbb C$. 
By Example 3.3 in \cite{TVH} we can find  a bounded  $\mathcal F$-hyperconvex domain  $D \subset \Delta$  and an increasing  sequence of negative   subharmonic functions  $ \rho_j $  defined on  bounded  hyperconvex domains $D_j$ such that  $D\subset D_{j+1} \subset D_j$, $D$ has no Euclidean interior point exists and   $\rho_j\nearrow \rho \in \mathcal E_0(D)$ a.e. on $D$. Let $a\in D$.
Thanks to Theorem 4.14 in \cite{ACCH} we can find an increasing sequence  of negative subharmonic functions $u_j \in \mathcal F(D_j)$ such that $u_j \leq u_{j+1}$ on $D_{j+1}$ and  
$$dd^c u_j  =\delta_a \text{ on } D_j, $$
where  $\delta_a$ denotes  the Dirac measure at $a$.
Let  $u$ be  the least $\mathcal F$-upper semicontinuous majorant  of $(\sup_{j\geq 1} u_j)$ in $D$. Then, $u_j\nearrow u$ a.e. on $D$ as $j\nearrow +\infty$. 
Let  $v \in \mathcal F(\Delta^{n-1})$ be such that $\lim_{z\ni \Delta^{n-1} \to \partial \Delta^{n-1}} v(z)=0$ and 
$$
(dd^c v)^{n-1} = \delta_{o} \text{ on } \Delta^{n-1}, \text{ where } o \text{ is the origin of } \mathbb C^{n-1}. 
$$
We set $\Omega:= D\times \Delta^{n-1}$ and
$\mu:= \sum_{k\geq 1} (dd^c w_k )^{n}$  on $\Omega $, where 
$$
w_k(t,z) :=  \max(2^k u(t), kv(z),-1)   , \ \ (t,z) \in D \times \Delta^{n-1}.
$$
It is easy to see that $\Omega$ is bounded $\mathcal F$-hyperconvex domain that has no Euclidean interior point exists.
Using Theorem 4.6 in \cite{KFW11} we obtain that 
$w_k \in \mathcal F \text{-} PSH^-(\Omega) \cap L^\infty (\Omega)$, and therefore,  $\mu$ vanishes on all pluripolar subsets of $ \Omega$.  
We now claim that 
$$
\int\limits_{D \times \Delta ^{n-1}} (- \max(u,v))^p d \mu <+\infty, \ \forall p>1.
$$
Indeed, since $dd^c  \max (  u_j  , - \frac{1}{2^{k}})=0$ on $D_j \cap  \{u_j  \neq - \frac{1}{2^{k}}\} $, by  Corollary 2.1 in \cite{ACH} and Corollary 4.2 in \cite{ACH} we get  
\begin{align*}
&\int\limits_{D_j  \times \Delta ^{n-1}} (- \max(u_j,v) )^p (dd^c \max(2^k u_j , kv,-1) )^{n} 
\\& = \int\limits_{D_j  \times \Delta ^{n-1}}(- \max(u_j,v) )^p dd^c  \max ( 2^{k} u_j  , -1)  \wedge (dd^c  \max ( k v  , -1) )^{n-1} 
\\& =  2^k k^{n-1}  \int\limits_{D_j  \times \Delta ^{n-1}} (- \max(u_j,v) )^p dd^c  \max (  u_j  , - \frac{1}{2^{k}})  \wedge (dd^c  \max (  v  , - \frac{1}{k}) )^{n-1}
\\& \leq  2^{-k(p-1)} k^{n-1}  \int\limits_{D_j  \times \Delta ^{n-1}} dd^c  \max (  u_j  , - \frac{1}{2^{k}})  \wedge (dd^c  \max (  v  , - \frac{1}{k}) )^{n-1} 
\\& = 2^{-k(p-1)} k^{n-1}  \int\limits_{D_j  } dd^c  \max (  u_j  , - \frac{1}{2^{k}})    \int\limits_{ \Delta ^{n-1}}  (dd^c  \max (  v  , - \frac{1}{k}) )^{n-1} 
\\& = 2^{-k(p-1)} k^{n-1} . 
\end{align*}
Proposition 2.7 in \cite{TVH} implies that 
\begin{align*}
 \int\limits_{D \times \Delta ^{n-1}} (- \max(u,v))^p d \mu 
& \leq  \liminf_{j\to+\infty} \int\limits_{D_j  \times \Delta ^{n-1}} (- \max(u_j,v) )^p \sum_{k\geq 1}(dd^c \max(2^k u_j , k v,-1) )^{n}  
\\
& \leq \sum_{k\geq 1} 2^{-k(p-1)} k^{n-1} <+\infty. 
\end{align*}
This proves the claim. 
Now, assume that $\mu = NP(dd^c w)^n$ for some $w \in\mathcal  F \text{-} PSH^-(\Omega)$. 
We claim that $w\leq w_k$ in $\Omega$ for any $k\geq 1$. Indeed, let $h\geq 1$ be an integer number and define 
$$
\varphi_h(t,z) :=  \max(h\rho(t), 2^k u(t),   kv(z),-1)   , \ \ (t,z) \in D \times \Delta^{n-1}.
$$
By Proposition 2.7 in \cite{TVH} and  Corollary 2.1 in \cite{ACH}    we have 
\begin{align*}
\int_\Omega (dd^c \varphi_h)^n 
& \leq \liminf_{j\to +\infty} \int _{D_j \times \Delta^{n-1}} (dd^c \max(h\rho_j  , 2^k u_j ,  kv ,-1)  )^{n}  
\\ & =    \liminf_{j\to +\infty} \int _{D_j  }  dd^c \max(h\rho, 2^k u  ,-1)  \int _{ \Delta^{n-1}} (dd^c \max( kv ,-1) )^{n-1}  
\\ & \leq    \liminf_{j\to +\infty} \int _{D_j  }  dd^c \max( 2^k u  ,-1)  \int _{ \Delta^{n-1}} (dd^c \max( kv ,-1) )^{n-1}  
\\& = 2^k k^{n-1}. 
\end{align*}
This implies that $\varphi_h \in \mathcal E_0 (\Omega)$ and
\begin{align*}
\sup_{h\geq 1} \int_\Omega (dd^c \varphi_h)^n  
\leq  2^k k^{n-1}. 
\end{align*}
Since $w_k$ is bounded and $\varphi_h \searrow w_k$ on $\Omega$ as $h \nearrow+\infty$, we infer that   $w_k \in \mathcal F_1 (\Omega)$. 
Proposition \ref{lem0} implies that   
$w\leq w_k \text{ in } \Omega$
because $(dd^c w)^n \leq (dd^c w_k)^n $ on $\{w>-\infty\}$. 
This proves the claim.  
Letting $k\to\infty$ we arrive that $w \leq -1 $ on $\Omega$, and hence,  $w +1\in \mathcal  F \text{-} PSH^- (\Omega)$. Replace $w$ by $w+1$ and using above argument we obtain that 
$w +1 \leq -1 \text{ on } \Omega.$
Therefore,  $w \leq -2 $ on $\Omega$. By induction we obtain that  
$$
w\equiv -\infty \text{ in } \Omega .
$$
This is impossible.  Thus, there is no function $w\in\mathcal  F \text{-} PSH^-(  \Omega )$ satisfying $NP(dd^c w)^n=\mu$ on $QB( \Omega)$. The proof is complete. 
\end{proof}








\end{document}